\crefname{algocf}{algorithm}{algorithms}
\newcommand{\eqnref}[1]{(\ref{#1})}
\newtheorem{theorem}{Theorem}
\newdefinition{definition}{Definition}
\newdefinition{remark}{Remark}
\newdefinition{example}{Example}
\newproof{proof}{Proof}
\newcommand{\ignore}[1]{}
\def\tsc#1{\csdef{#1}{\textsc{\lowercase{#1}}\xspace}}
\begin{document}
\let\WriteBookmarks\relax
\def\floatpagepagefraction{1}
\def\textpagefraction{.001}

\shorttitle{Sequential Multilinear Nystr\"om}    

\shortauthors{}  

\title [mode = title]{A sequential multilinear Nyström algorithm for streaming low-rank approximation of tensors in Tucker format}  

\tnotemark[1] 

\tnotetext[1]{} 

%

\author[1]{Alberto Bucci}[orcid=0009-0002-7550-6025]


\ead{alberto.bucci@phd.unipi.it}


\credit{}

\affiliation[1]{organization={Department of Mathematics, University of Pisa},
            city={Pisa},
            postcode={56127}, 
            country={Italy}}

\author[2]{Behnam Hashemi}[orcid=0000-0002-8847-2058]

\cormark[1]


\ead{b.hashemi@le.ac.uk}


\credit{}

\affiliation[2]{organization={School of Computing and Mathematical Sciences, University of Leicester}, 
            city={Leicester},
            postcode={LE1 7RH}, 
            country={United Kingdom}
            }

\cortext[1]{Corresponding author}

\fntext[1]{}


\begin{abstract}
We present a sequential version of the multilinear Nyström algorithm which is suitable for low-rank Tucker approximation of tensors given in a streaming format. Accessing the tensor $\mathcal{A}$ exclusively through random sketches of the original data, the algorithm effectively leverages structures in $\mathcal{A}$, such as low-rankness, and linear combinations. We present a deterministic analysis of the algorithm and demonstrate its superior speed and efficiency in numerical experiments including an application in video processing. \nocite{*}
\end{abstract}


\begin{keywords}
Low-rank approximation \sep Nystr\"om method \sep randomized linear algebra \sep tensors \sep
Tucker decomposition \sep \MSC[2024] 15A69, 65F55, 68W20
\end{keywords}

\maketitle

\section{Introduction} \label{intro}

Given a tensor $\mathcal{A} \in \mathbb{R}^{n_1\times n_2 \times \dots \times n_d}$, the Tucker decomposition refers to a family of representations that factorize $\mathcal{A}$ into the multilinear product of a core tensor $\mathcal{C}\in \mathbb{R}^{r_1\times r_2 \times \dots \times r_d}$ and factor matrices $F_k\in \mathbb{R}^{n_k\times r_k}\ (r_k\leq n_k) $ along each mode $k = 1, 2, \dots, d$, i.e.,
\[
\mathcal{A} = \mathcal{C} \times_1 F_1\times_2 F_2 \dots \times_d F_d := \mathcal{C}\times_{k=1}^d F_k.
\]
See Section \ref{sec:preliminaries} for the definition of the mode-$k$ product.

We address the problem of finding an efficient randomized algorithm for the streaming low-rank approximation in this format exploiting only tensor mode products, or contractions. The algorithm can be interpreted as a sequential extension of the multilinear Nystr\"om (MLN) \cite{bucci2023multilinear} algorithm and of \cite[Alg. 4.3]{Another_Tucker}. By iterating the compression on sketched versions of the original tensor it avoids several operations that can be computational bottlenecks.

In this sense SMLN is similar to ST-HOSVD \cite{sthosvd}, randomized ST-HOSVD \cite{R-ST-HOSVD, Che21, Saibaba_Minster_Arvind}, and RTSMS \cite{hashemi2023rtsms}, but unlike these methods, is suitable for the streaming model \cite{clarkson2009numerical}. 
It can also be interpreted as a higher-order variant of the generalized Nystr\"om (GN) method \cite{tropp2017practical,nakatsukasa2020fast}.

The algorithm is highly efficient in the dense case and, as shown in the numerical experiments, offers a significant speed-up in the streaming setting, which is the main focus of this work. See \cite{Thanh} for
a readable overview of various streaming tensor decompositions. 

\section{Notations and preliminaries}\label{sec:preliminaries}

In this section, we introduce a few concepts and notations used
throughout the paper. A \textit{tensor} 
$\mathcal{A}\in\mathbb{R}^{n_1\times \dots \times n_d}$ is a $d$ dimensional array with entries
$a_{i_1i_2\dots i_d}$. 
The symbols used for transposition, Moore-Penrose inverse and 
Kronecker products of matrices are $T$, $\dagger$ and $\otimes$ respectively.
We use $\|\cdot\|_F$ for the Frobenius norm and $\|\cdot\|_2$ for
the spectral norm. We denote by $Q = \mathrm{orth}(X)$ the $Q$ factor of an economy
size QR factorization of a matrix $X$ with more rows than columns.
The \textit{mode}-$k$ \textit{matricization} of {$\mathcal{A}$} is the $n_k\times \prod_{i\neq k} n_i$ matrix $\mathcal{A}_k$. The \textit{mode}-$k$ \textit{product} of a tensor {$\mathcal{A}$} and a matrix $X \in \mathbb{R}^{m\times n_k}$  is denoted by $\mathcal{A}\times_k X$. Accordingly the $n_k\times m$ matrix $\mathcal{A}\times_{-k} Y$ denotes the mode product of $\mathcal{A}$ and $Y\in \mathbb{R}^{\prod_{i\neq k} n_i \times m}$ along all but the $k$th index. 

The mode-$k$ product along all dimensions can be effectively expressed by leveraging a mix of matricizations and Kronecker products as follows:
\[
(\mathcal{A}\times_1 X_1 \times \dots \times X_d)_{k} = X_k \mathcal{A}_{k} (X_d \otimes \dots\otimes X_{k+1} \otimes X_{k-1} \otimes \dots \otimes X_1)^T.
\]
Finally $X_{\otimes_{< k}} := X_{k-1} \otimes \dots \otimes X_1$. 

\section{Sequential Multilinear Nystr\"om (SMLN)}

Since SMLN is a higher-order variant of GN and a sequential version of MLN, let us first briefly review these two algorithms. Given a matrix $A \in \mathbb{R}^{m \times n}$, the GN algorithm computes an approximation of rank $r$, denoted as $\widehat{A}$, as follows. Initially, two random dimension reduction maps (DRMs) or sketchings are generated: $X \in \mathbb{R}^{n \times r}$ and $Y \in \mathbb{R}^{m \times (r+\ell)}$, where $\ell$ is an oversampling parameter to enhance accuracy and stability, then the approximant $\widehat{A}= AX (Y^TAX)^\dagger Y^TA$ if formed. To be precise the algorithm first computes the terms $AX$, $Y^TA$ and $Y^TAX$ and finally forms the rank $r$ factorization $\widehat{A} \approx (AXR^{-1}) (Q^T Y^T A)$, where $QR$ denotes the economy-sized QR of $Y^TAX$. 
The MLN algorithm extends this approach to tensors. Given a tensor {$\mathcal{A}$} the MLN algorithm first generates random sketchings $X_1,\dots, X_d$ and $Y_1, \dots, Y_d$ of size $\prod_{i\neq k}^d n_i\times r_k$ and $n_k \times (r_k+\ell_k)$ respectively, then computes the small tensor $\mathcal{A}\times_{k=1}^d Y_k^T$ and the matrices $\mathcal{A}_k X_k$ and $Y_k^T \mathcal{A}_k X_k$. Finally, it forms the multilinear rank $(r_1, \dots, r_d)$ approximation $\widehat{\mathcal{A}} = \left(\mathcal{A}\times_{k=1}^d Y_k^T\right)\times_{k=1}^d \mathcal{A}_k X_k( Y_k^T\mathcal{A}_k X_k)^\dagger$. For dense tensors, the most expensive part of MLN is the computation of the contractions $\mathcal{A}_k X_k$, for $k= 1, \dots, d$ which requires $\sum_{k=1}^d (2(n_1 \cdots n_{k-1} n_{k+1} \cdots n_d)-1) (r_k n_k) \sim 2dr n^d$ operations.
We reduce these costs by applying these contractions to progressively smaller tensors $\mathcal{A}\times_{i=1}^{k-1} Y_i^T$.
Recall that the MLN approximant can be written as $\widehat{\mathcal{A}}\times_{k=1}^d P_k$, where $P_k$ is the oblique projection $\mathcal{A}_k X_k( Y_k^T\mathcal{A}_k X_k)^\dagger Y_k^T$, we instead set

\begin{equation} \label{eq: projection}
    P_k=\mathcal{A}_k(I \otimes Y_{\otimes_{< k}}  ) X_k (Y_k^T \mathcal{A}_k(I \otimes Y_{\otimes_{< k}})X_k)^\dagger Y_k^T
\end{equation}
and define the SMLN approximant $\widehat{\mathcal{A}}$ of $\mathcal{A}$ as 

\begin{equation}\label{eq: smln_projection}
    \widehat{\mathcal{A}} := \mathcal{A}\times_{k=1}^d P_k = \left(\mathcal{A}\times_{k=1}^d Y_k^T\right) \times_{k=1}^d \mathcal{A}_k(I \otimes Y_{\otimes_{< k}}  )X_k (Y_k^T\mathcal{A}_k(I \otimes Y_{\otimes_{< k}})X_k)^\dagger.
\end{equation}

The overall cost is effectively reduced as the contractions involve the smaller matrices $\mathcal{A}_k(I \otimes Y_{\otimes_{< k}}  )$ rather than $\mathcal{A}_k$.
We summarize the method in Alg. \ref{alg:sequential MLN}. Note that, as an input, the processing order $p$ can be chosen.

\begin{algorithm}
\caption{Sequential Multilinear Nystr\"om} 
\vspace{2mm}
\textbf{Input} \hspace{6mm} $\mathcal{A}\in \mathbb{R}^{n_1\times\dots \times n_d}$, multilinear rank $r = (r_1, \dots, r_d) \leq (n_1,\dots, n_d)$, oversampling vector $\ell= (\ell_1,\dots, \ell_d)$,\\
\phantom{\textbf{Input}} \hspace{6mm} ordering $p = (p_1, \dots, p_d)$ with skip modes $p_{j+1}, p_{j+2}, \dots, p_d$.\\
\textbf{Output} \hspace{3mm} Low-rank Tucker approximant $\widehat{\mathcal{A}}$ of $\mathcal{A}$.\\
\phantom{\textbf{Input}} \hspace{6mm} Set $\mathcal{B} = \mathcal{A}$.\\
\phantom{\textbf{Input}} \hspace{6mm} \textbf{for} $k = 1, \dots, j$\\
\mbox{}~~~~~~~~~~~~~~~~~~~~Draw random matrices $X_{p_k}\in \mathbb{R}^{\prod_{i<k} (r_{p_i}+\ell_{p_i})\prod_{i>k} n_{p_i}\times r_{p_k}}$ and $Y_{p_k}\in \mathbb{R}^{n_{p_k}\times (r_{p_k}+\ell_{p_k})}$.\\
\mbox{}~~~~~~~~~~~~~~~~~~~~Compute $B_{p_k} X_{p_k}$, $Y_{p_k}^T B_{p_k}$ and $Y_{p_k}^T B_{p_k} X_{p_k}$.\\
\mbox{}~~~~~~~~~~~~~~~~~~~~Compute $QR$ factorization $Y_{p_k}^T B_{p_k} X_{p_k} = Q_{p_k} R_{p_k}$.\\
\mbox{}~~~~~~~~~~~~~~~~~~~~Set $F_{p_k} = B_{p_k} X_{p_k} R_{p_k}^{-1}$ and $B_{p_k}= Y_{p_k}^T B_{p_k}$.\\ 
\phantom{\textbf{Input}} \hspace{6mm} \textbf{end}\\ 
\phantom{\textbf{Input}} \hspace{6mm} Compute  $\widehat{\mathcal{A}} = (\mathcal{B}\times_{k=p_1}^{p_j} Q_k^T)\times_{k=p_1}^{p_j} F_k$.
\vspace{2mm}
\label{alg:sequential MLN}
\end{algorithm}

The standard Tucker decomposition involves low-rank approximation across all of the $d$ modes resulting in $d$ factor matrices. However, in some cases, a {\em partial} Tucker decomposition is desired (see Sec. \ref{video:sec}), where the tensor is not compressed in certain modes referred to as skip modes. This means that there is no factor matrix in the skip modes or equivalently, the corresponding factor matrix is the identity matrix. In Alg. \ref{alg:sequential MLN}, the first $j$ modes are treated as the standard (non-skipped) modes, while the remaining $d-j$ modes are designated as the skip modes. That configuration yields a partial Tucker decomposition in the first $j$ modes. To obtain the standard Tucker decomposition, one simply sets $j = d$.

\section{SMLN in the streaming model} \label{streamable_imp:sec}
The aim of this section is to compute the SMLN approximation of a linear combination of tensors $\mathcal{A} = \lambda_1 \mathcal{H}_1 + \dots + \lambda_m \mathcal{H}_m$ of size $n_1 \times \dots \times n_d$ accordingly to the streaming model \cite{clarkson2009numerical}. In this model each $\mathcal{H}_k$ is processed and discarded before accessing the next one.

We divide the algorithm into two phases: a sketching phase and a recovery phase. In the sketching phase, we draw $2d$ random sketchings and compute the contractions. In the recovery phase, we use the outputs of the sketching phase to obtain the low-rank factorization.
For simplicity, we fix the processing order to $p = (1, \dots, d)$ and consider $j = d$ which means we do not skip any of the modes.

In particular, given a set of target ranks $r_1, \dots, r_d$ and a set of oversampling parameters $\ell_1, \dots, \ell_d$, in the sketching phase we first generate the sketching matrices
    \[X_k\in \mathbb{R}^{\prod_{i<k} (r_{i}+\ell_{i})\prod_{i>k} n_{i}\times r_{k}}\quad\text{ and } \quad
    Y_{k}\in \mathbb{R}^{n_{k}\times (r_{k}+\ell_{k})},
    \]
and then, as described in Algorithm \ref{alg:SMLN_Sketch}, we use these sketchings to perform the contractions.

\begin{algorithm}
\caption{[$\mathcal{B}_d$, \{$\Omega_k$\}, \{$\Psi_k$\}] = SMLN\_Sketch($\mathcal{A}$, $\{X_k$\}, \{$Y_k$\}).} \label{alg:SMLN_Sketch}

\textbf{Input} Tensor $\mathcal{A}$, sketching matrices $X_k$ and $Y_k$.\\
\textbf{Output} Tensor $\mathcal{B}_d$, and matrices $\Omega_k$, $\Psi_k$.\\
\phantom{\textbf{Input}} \hspace{6mm} Set $\mathcal{B}_0 = \mathcal{A}$;\\
\phantom{\textbf{Input}} \hspace{6mm} \textbf{for} $k = 1, \dots, d$\\
    \mbox{}~~~~~~~~~~~~~~~~~~~~$\mathcal{B}_k = \mathcal{B}_{k-1}\times_k Y_k^T$, \quad ${\Omega}_k = \mathcal{B}_{k-1}\times_{-k} X_k$, \quad ${\Psi}_k = Y_k^T\Omega_k $,\\
\phantom{\textbf{Input}} \hspace{6mm} \textbf{end} 
 
\end{algorithm}

 More specifically, each $\Omega_k$ is of size $n_k \times r_k$, and each $\Psi_k$ is of size $(r_k + \ell_k) \times r_k$. In addition, $\mathcal{B}_d$ is a tensor of size $(r_1 + \ell_1) \times (r_2 + \ell_2) \times \dots \times (r_d + \ell_d)$. Finally, we compute the actual factors and the core tensor of the decomposition, see Alg \ref{alg:SMLN_Recovery}.

\begin{algorithm}
\caption{[$\mathcal{C}$, \{$F_k$\}] = SMLN\_Recovery($\mathcal{B}_d$, \{$\Omega_k$\}, \{$\Psi_k$\}) .}
\label{alg:SMLN_Recovery}

\textbf{Input} Sketched factors $\mathcal{B}_d$, $\Omega_k$, $\Psi_k$.\\
\textbf{Output}
 Tucker factors $\mathcal{C}$, $F_k$.\\
\phantom{\textbf{Input}} \hspace{6mm} \textbf{for} $k = 1, \dots, d$\\
    \mbox{}~~~~~~~~~~~~~~~~~~~~$[Q_k, R_k] = \mathrm{qr}(\Psi_k)$, \quad $F_k = \Omega_k R_k^\dagger$,\\
\phantom{\textbf{Input}} \hspace{6mm} \textbf{end} \\
    \phantom{\textbf{Input}} \hspace{6mm} $\mathcal{C} = \mathcal{B}_d\times_{k=1}^d Q_k^T$.
\end{algorithm}

To approximate the tensor $\mathcal{A}$ given as a stream of $\mathcal{H}_s$ we do as follows: for each $s=1,\dots, m$ we compute [$\mathcal{B}^{(s)}_d$, \{$\Omega_k^{(s)}$\}, \{$\Psi_k^{(s)}$\}]= SMLN\_Sketch($\mathcal{H}_s$, $\{X_k\}$, \{$Y_k$\}), next we set
$\mathcal{B}_d = \sum_{s = 1}^m \lambda_s \mathcal{B}_d^{(s)}$, $\Omega_k = \sum_{s = 1}^m \lambda_s \Omega^{(s)}_k$ and $\Psi_k = \sum_{s = 1}^m \lambda_s \Psi^{(s)}_k$ and finally we compute [$\mathcal{C}$, \{$F_k$\}] = SMLN\_Recovery($\mathcal{B}_d$, \{$\Omega_k$\}, \{$\Psi_k$\}).

Note that after each call of SMLN\_Sketch, the previous sketching can be added to the new one and discarded.

The cost of the most expensive operation in both Algorithms~\ref{alg:SMLN_Sketch} and~\ref{alg:SMLN_Recovery} is $(2(n_2 \cdots n_d)-1) (r_1 n_1)$. This corresponds to the operation of computing the $n_1 \times r_1$ matrix $\Omega_1$ in the very first step where {$\mathcal{A}$} is contracted with $X_1\in \mathbb{R}^{(n_2 n_3 \cdots n_d) \times r_1}$ in all-but-one of the modes. Due to the sequential nature of the algorithm, every other operation within both algorithms involves tensors and matrices of smaller size. This is especially important in the case of the recovery phase which merely 
involves small matrices. The most expensive operation in the recovery phase is the computation of the factor matrix $F_k$ whose cost is linear in terms of $n_k$ and cubic in terms of $r_k$.

\section{Analysis of SMLN}

In this section, we present a deterministic upper bound for the accuracy of the method. Given that the majority of the steps are derived from \cite{bucci2023multilinear} and the reference therein, we will provide only a brief overview.

\begin{theorem}
 \label{thm: deterministic_theorem}
    Let $\mathcal{A}\in \mathbb{R}^{n_1\times \dots \times n_d}$ be a tensor and let $\widehat{\mathcal{A}}$ be the output of Algorithm \ref{alg:sequential MLN}. Then, denoting with $Q_kR_k =  \mathcal{A}_k (I \otimes Y_{\otimes_{< k}}) X_k$ the economy-size QR and assuming $Y_k^T \mathcal{A}_k (I \otimes Y_{\otimes_{< k}}) X_k$ to be of full column-rank for each $k=1, \dots, d$, we have
\begin{equation}
\|\mathcal{A}-\widehat{\mathcal{A}}\|_F\leq \sum_{k=1}^d \|Q_{k \perp}^T \mathcal{A}_k(I \otimes Y_{\otimes_{< k}}) \|_F\|I - Q_k(Y_k^T Q_k)^\dagger Y_k^T\|_2\prod_{i=1}^{k-1}\|(Y_i^T Q_i)^\dagger\|_2.
\end{equation}
\end{theorem}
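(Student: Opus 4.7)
My plan is to prove the bound by a mode-wise telescoping of the error, followed by a mode-$k$ matricization that exposes the Kronecker-product structure promised by the statement.

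First, I would use the identity $\mathcal{A} - \widehat{\mathcal{A}} = \sum_{k=1}^d \mathcal{A}\times_{i=1}^{k-1} P_i \times_k (I-P_k)$, obtained by collapsing a standard telescoping sum over the successive contractions $\times_1 P_1, \ldots, \times_d P_d$ and treating modes $j>k$ as identities. The triangle inequality in Frobenius norm then reduces the task to bounding $\|\mathcal{A}\times_{i<k} P_i \times_k (I-P_k)\|_F$ for each $k$.

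Fixing $k$ and matricizing the $k$-th summand along mode $k$ produces $(I-P_k)\mathcal{A}_k(I \otimes P_{k-1}^T \otimes \cdots \otimes P_1^T)$, where the leading $I$ collects the untouched modes $j>k$. The full-column-rank hypothesis combined with an economy QR $\mathcal{A}_i(I\otimes Y_{\otimes_{<i}})X_i = Q_i S_i$ gives $P_i = Q_i(Y_i^T Q_i)^\dagger Y_i^T$, so $P_i^T = Y_i L_i$ with $L_i = (Y_i^T Q_i)^{\dagger T} Q_i^T$. The mixed-product rule $(AC)\otimes(BD)=(A\otimes B)(C\otimes D)$ then separates $I\otimes P_{k-1}^T \otimes \cdots \otimes P_1^T$ into the product $(I\otimes Y_{\otimes_{<k}})(I\otimes L_{k-1}\otimes\cdots\otimes L_1)$, which is the key structural identity because it exposes precisely the $(I\otimes Y_{\otimes_{<k}})$ factor appearing in the theorem.

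Submultiplicativity of the Frobenius norm combined with the Kronecker-norm identity then yields $\|I\otimes L_{k-1}\otimes\cdots\otimes L_1\|_2 \le \prod_{i=1}^{k-1}\|(Y_i^T Q_i)^\dagger\|_2$, using $\|Q_i\|_2=1$. For the remaining piece $\|(I-P_k)\mathcal{A}_k(I\otimes Y_{\otimes_{<k}})\|_F$, the observation $(I-P_k)Q_k = 0$ lets me write $(I-P_k)M = (I-P_k)Q_{\perp k}Q_{\perp k}^T M$; a standard oblique-projection estimate in the spirit of \cite{bucci2023multilinear} then bounds this by $\|I + Q_k(Y_k^T Q_k)^\dagger Y_k^T\|_2 \cdot \|Q_{\perp k}^T M\|_F$. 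Multiplying the three estimates and summing over $k$ gives the stated bound.

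The most delicate step is the Kronecker factorization: one must verify that the transposes $P_i^T$ combine in the correct order so that $Y_{\otimes_{<k}}$ truly factors out on the left of the pseudoinverse residual, leaving behind a matrix whose spectral norm is controlled by a \emph{product} (rather than a sum) of the quantities $\|(Y_i^T Q_i)^\dagger\|_2$. Once that identity is in place, the remainder is essentially a per-mode application of the matrix generalized Nystr\"om analysis, with the minor twist that each mode-$k$ contraction now acts on a tensor that has already been partially sketched in the modes $i<k$.
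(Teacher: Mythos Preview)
Your proposal is correct and follows essentially the same route as the paper: telescoping the error over the successive mode contractions, matricizing each summand along mode $k$ to obtain $(I-P_k)\mathcal{A}_k(I\otimes P_{\otimes_{<k}}^T)$, then factoring each $P_i^T=Y_iL_i$ so that the mixed-product rule pulls out $(I\otimes Y_{\otimes_{<k}})$ and leaves a Kronecker factor whose spectral norm is $\prod_{i<k}\|(Y_i^TQ_i)^\dagger\|_2$, and finally handling $(I-P_k)$ via $(I-P_k)Q_k=0$ together with the orthogonal decomposition $I=Q_kQ_k^T+Q_{\perp k}Q_{\perp k}^T$. You are in fact more explicit than the paper about the Kronecker factorization step, which the paper compresses into a single ``results bounded by'' line.
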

\begin{proof}
 
By \eqnref{eq: smln_projection} the approximant satisfies $\widehat{\mathcal{A}} = \mathcal{A}\times_{k=1}^d P_k$. By adding and subtracting terms of the form $\mathcal{A}\times_{i=1}^k P_i$, we get

    \begin{equation} \label{eq: sum_and_substract}
        \|\mathcal{A}-\hat{\mathcal{A}}\|_F\leq \sum_{k=1}^d\|\mathcal{A}\times_{i=1}^{k-1} P_i-\mathcal{A}\times_{i=1}^k P_i\|_F = \sum_{k=1}^d\|(\mathcal{A}_k(I \otimes P_{\otimes_{< k}}^T) -P_k\mathcal{A}_k(I \otimes P_{\otimes_{< k}}^T)\|_F .
    \end{equation}
The $s$th addend in the latter sum, by Equation \eqref{eq: projection}, is bounded by 
\begin{align} \label{eq: matricized_addend}
    &\|(\mathcal{A}_s(I \otimes Y_{\otimes_{< s}}) -P_s\mathcal{A}_s(I \otimes Y_{\otimes_{< s}})\|_F \prod_{k=1}^{s-1} \|\mathcal{A}_k(I \otimes Y_{\otimes_{< k}}) X_k (Y_k^T \mathcal{A}_k(I\otimes Y_{\otimes_{< k}})X_k)^\dagger\|_2.
\end{align}
Now, $\mathcal{A}_k(I\otimes Y_{\otimes_{< k}})X_k (Y_k^T \mathcal{A}_k (I \otimes Y_{\otimes_{< k}}) X_k)^\dagger=Q_kR_k(Y_k^TQ_kR_k)^\dagger$ and by the full-rank assumptions it simplifies to $Q_k(Y_k^TQ_k)^\dagger$ for $k=1, \dots d$. This allows us to rewrite \eqref{eq: matricized_addend} as
\begin{align*}
    &\|(\mathcal{A}_s(I \otimes Y_{\otimes_{< s}}) -Q_s(Y_s^T Q_s)^\dagger Y_s^T \mathcal{A}_s(I \otimes Y_{\otimes_{< s}})\|_F \prod_{k=1}^{s-1} \|Q_k (Y_k^T Q_k)^\dagger\|_2.\\
\end{align*}

\vspace{-0.7cm}
And since $ I -Q_s(Y_s^T Q_s)^\dagger Y_s^T  = (I -Q_s(Y_s^T Q_s)^\dagger Y_s^T) (Q_s Q_s^T + Q_{s \perp}Q_ {s \perp}^T)=(I - Q_s(Y_s^T Q_s)^\dagger Y_s^T) Q_{s \perp} Q_{s \perp}^T$, where $Q_{s \perp}$ is the orthogonal complement of $Q_s$, a straightforward computation leads to 

\begin{equation}
\|\mathcal{A}-\hat{\mathcal{A}}\|_F\leq \sum_{k=1}^d \|Q_{k \perp}^T \mathcal{A}_k(I \otimes Y_{\otimes_{< k}}) \|_F\|I - Q_k(Y_k^T Q_k)^\dagger Y_k^T\|_2\prod_{i=1}^{k-1}\|(Y_i^T Q_i)^\dagger\|_2.
\end{equation}
\end{proof}

Theorem \ref{thm: deterministic_theorem} provides a bound on the error of approximation which depends on multiple terms, all extensively analyzed in the literature \cite{randomproof}.
Indeed the term $\|Q_{k \perp }^T \mathcal{A}_k(I \otimes Y_{\otimes_{< k}}) \|_F$ corresponds to the error in Frobenius norm of the randomized SVD algorithm applied to the matrix $\mathcal{A}_k(I \otimes Y_{\otimes_{< k}})$ with sketching $X_k$, and strongly depends on the trailing singular values of $\mathcal{A}_k(I \otimes Y_{\otimes_{< k}})$ and also the terms of the form $\|I - Q_k(Y_k^T Q_k)^\dagger Y_k^T\|_2$ and $\|(Y_k^T Q_k)^\dagger\|_2$ arise in the analysis of the HMT algorithm \cite{randomproof}.

The fact that $\|Q_{k \perp}^T \mathcal{A}_k(I \otimes Y_{\otimes_{< k}}) \|_F$ depends on the singular values of $\mathcal{A}_k(I \otimes Y_{\otimes_{< k}})$, rather than those of $\mathcal{A}_k$, could be the only potential cause for concern. Indeed even if oblivious subspace embeddings preserve singular values with high probability \cite{meier2024fast}, the extent to which Kronecker embedding preserves the singular values is still an open question. For a more detailed discussion, see \cite{jin2021faster}.

\section{Numerical experiments}
We present two numerical experiments illustrating the performances of our method for streaming data.

The first experiment (Figure \ref{fig: artificial experiment 2}) compares the accuracy and computation time of MLN and SMLN for computing the Tucker approximation of an artificially generated order-4 tensor $\mathcal{A} = \sum_{s=1}^{15} \mathcal{H}_s$ of size $100 \times 100 \times 100 \times 100$. Each summand satisfies 
$\mathcal{H}_s = \mathcal{S}\times_{i=1}^4 Q_i^{(s)}$, where $\mathcal{S}$ is a super-diagonal tensor with diagonal elements $0.01^k$ and the $Q_i^{(s)}$ are orthogonal matrices generated independently according to the Haar distribution; leading to exponential decay in the singular values of each matricization of $H_s$. The rank of each decomposition is $(r,r,r,r)$ with $r$ varying between 10 and 55 with a step size of 5. While both MLN and SMLN achieve similar accuracy, the streaming SMLN method requires only 30\% of the computation time required by the non-sequential MLN method.

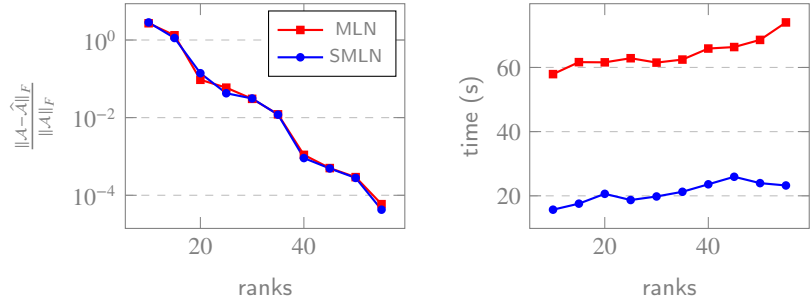
\begin{figure}
\centering
\begin{tikzpicture}
\begin{semilogyaxis}[
    x label style={at={(axis description cs:0.5,0)},anchor=north},
    xlabel={\small{ranks}},
    ylabel={{$\frac{\|\mathcal{A}-\widehat{\mathcal{A}}\|_F}{\|\mathcal{A}\|_F}$}},
    ymajorgrids=true,
    grid style=dashed,
    width=.32\linewidth,
    legend pos = north east,
    mark size = 1.2pt,
    restrict x to domain=0:125
]

\addplot [color = red, style = thick,mark = square*,  mark size=1.2pt] table [col sep=tab, x index = 0, y index = 1,x filter/.code={\pgfmathparse{mod(\coordindex,50)==0 ? x : inf}}] {accuracy_4D.dat};
\addplot [color = blue, style = thick,mark = *,  mark size=1.2pt] table [col sep=tab, x index = 0, y index = 2,x filter/.code={\pgfmathparse{mod(\coordindex,50)==0 ? x : inf}}] {accuracy_4D.dat};

\legend{\scriptsize{MLN}, \scriptsize{SMLN}}
\end{semilogyaxis}
\end{tikzpicture}~\begin{tikzpicture}
\begin{axis}[
    x label style={at={(axis description cs:0.5,0)},anchor=north},
    xlabel={\small{ranks}},
    ylabel={\small{time (s)}},
    y label style={at={(axis description cs:0.20,.5)},anchor=south},
    ymajorgrids=true,
    grid style=dashed,
    width=.32\linewidth,
    mark size = 1.2pt,
    restrict x to domain=0:125
]
\hspace*{0.5cm}
\addplot [color = red, style = thick,mark = square*,  mark size=1.2pt] table [col sep=tab, x index = 0, y index = 1,x filter/.code={\pgfmathparse{mod(\coordindex,50)==0 ? x : inf}}] {times_4D.dat};
\addplot [color = blue, style = thick,mark = *,  mark size=1.2pt] table [col sep=tab, x index = 0, y index = 2,x filter/.code={\pgfmathparse{mod(\coordindex,50)==0 ? x : inf}}] {times_4D.dat};
\end{axis}
\end{tikzpicture}
 \caption{Sequential Multilinear Nystr\"om with different choices of ranks $r$ and oversampling parameter $\ell = r/2$. The left plot shows the relative accuracy of the approximation in the Frobenius norm, and the right plot shows the execution time.} 
    \label{fig: artificial experiment 2}
\end{figure}

\subsection{Application to Tucker3 decomposition of order-4 tensors}  \label{video:sec}
Tensor decompositions have been used for video compression in the literature; see \cite{Zheng}, for instance. Here we explore a {\it partial} Tucker decomposition which takes advantage of our tensor-based sketching algorithm to efficiently process and analyze video data streams in a low-rank setting. Let $\mathcal{A}$ be an order-4 tensor of size $n_1 \times n_2 \times 3 \times n_4$ representing a video; the last dimension corresponds to the time variable. As time evolves, the frames of the video form a stream of tensors and one keeps collecting new frames into $\mathcal{A}$ hence representing it as 
\[
\mathcal{A} = \mathcal{A}_1 \circ e_1 + \mathcal{A}_2 \circ e_2  +  \dots + \mathcal{A}_{n_4} \circ e_{n_4}
\]
where $\mathcal{A}_s \circ e_s =: \mathcal{H}_s$ and $\circ$ represents outer product of an order-3 tensor $\mathcal{A}_s$ of size $n_1 \times n_2 \times 3$ and an $n_4 \times 1$ vector $e_s$ which is the $s$-th column of the $n_4 \times n_4$ identity matrix.

As the third dimension is already small, we aim at computing the Tucker3 decomposition $\mathcal{A} \approx \mathcal{C} \times_1 F_1 \times_2 F_2 \times_4 F_4,$ of the 4-th order tensor $\mathcal{A}$ where the core $\mathcal{C}$ is of size $r_1 \times r_2 \times 3 \times r_4$ and the three factor matrices are of size $n_1 \times r_1$, $n_2 \times r_2$ and $n_4 \times r_4$, respectively. 

We now examine our algorithm to compress the first 200 frames of the \texttt{big\_buck\_bunny\_scene} from the video dataset in \cite{Kiess}. We can consider the third mode as a skip mode and apply our algorithms with the ordering $(p_1, p_2, p_3, p_4) = (1, 2, 4, 3)$; see comments following Alg. \ref{alg:sequential MLN}. The video corresponds to a tensor of size $1080 \times 1920 \times 3 \times 200$. Using \cite{tensor_toolbox}, we run the streaming variants of both MLN and SMLN as outlined above with two multilinear ranks $\hat r = (200, 300, 3, 50)$ and $\check r = (400, 750, 3, 100)$. In the case of $\hat r$, MLN takes 110 seconds while SMLN needs 69 seconds. We aborted execution of MLN in the case of the larger rank $\check r$ as it took longer than 30 minutes, but SMLN runs in 8 minutes and 44 seconds. Note that the speed could be improved further if precisions lower than double are used, while still achieving a level of accuracy that is sufficient in the context of imaging. See Fig.~\ref{fig:video} for visual comparison of a few frames of the resulting compressions. The figure also reports the peak signal-to-noise ratio (PSNR) of each frame, with the same frame from the original video (i.e., the first picture in each row) as the reference. Note that a greater PSNR value indicates better image quality.

 \begin{figure} 
\includegraphics[width=0.92\textwidth]{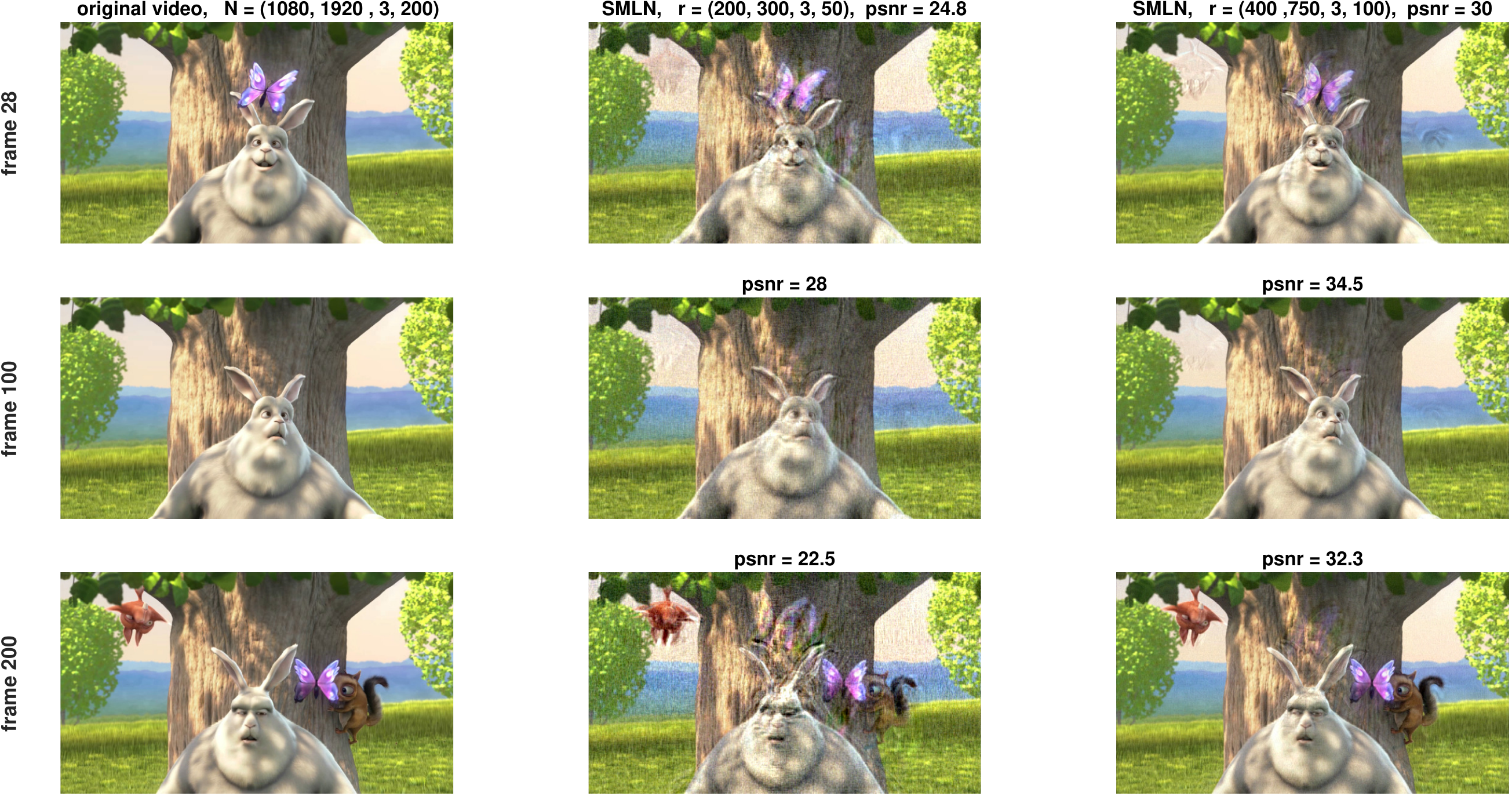}
 \caption{Streaming sequential multilinear Nystr\"om for Tucker3 decomposition of an order-4 tensor}
    \label{fig:video}
 \end{figure}
 \section*{Acknowledgment}
The authors appreciate insightful comments by the referees. They are also grateful for helpful discussions with Yuji Nakatsukasa, Leonardo Robol, and Gianfranco Verzella. Alberto Bucci is member of the INdAM Research group GNCS and has been supported by the PRIN 2022 Project “Low-rank Structures and Numerical Methods in Matrix and Tensor Computations and their Application” and by the MIUR Excellence Department Project awarded to the Department of Mathematics, University of Pisa, CUP I57G22000700001.

\bibliographystyle{model1-num-names}

\bibliography{biblio}
\end{document}